%
\documentclass[12pt, reqno]{amsart}
\usepackage{amsmath, amsthm, amscd, amsfonts, amssymb, graphicx, color}
\usepackage[bookmarksnumbered, colorlinks, plainpages]{hyperref}
\hypersetup{colorlinks=true,linkcolor=red, anchorcolor=green, citecolor=cyan, urlcolor=red, filecolor=magenta, pdftoolbar=true}

\textheight 22.7truecm \textwidth 15.2truecm
\setlength{\oddsidemargin}{0.35in}\setlength{\evensidemargin}{0.35in}

\setlength{\topmargin}{-.5cm}

\newtheorem{theorem}{Theorem}[section]
\newtheorem{lemma}[theorem]{Lemma}

\theoremstyle{definition}
\newtheorem{definition}[theorem]{Definition}

\theoremstyle{remark}
\newtheorem{remark}[theorem]{Remark}
\numberwithin{equation}{section}

\begin{document}

\setcounter{page}{1}

\title[Inhomogeneous pseudo-parabolic equation ..]{Nonexistence of global solutions for an inhomogeneous pseudo-parabolic equation}

\author[M. B. Borikhanov and B. T. Torebek]{Meiirkhan B. Borikhanov and  Berikbol T. Torebek}

\address{{Meiirkhan B. Borikhanov \newline Khoja Akhmet Yassawi International Kazakh--Turkish University \newline Sattarkhanov ave., 29, 161200 Turkistan, Kazakhstan \newline Department of Mathematics: Analysis, Logic and Discrete Mathematics \newline
Ghent University, Belgium}}
\email{meiirkhan.borikhanov@ayu.edu.kz, meiirkhan.borikhanov@ugent.be}

\address{{Berikbol T. Torebek \newline Institute of
Mathematics and Mathematical Modeling \newline 125 Pushkin str.,
050010 Almaty, Kazakhstan \newline Department of Mathematics: Analysis, Logic and Discrete Mathematics \newline
Ghent University, Belgium}}
\email{{torebek@math.kz, berikbol.torebek@ugent.be}}


\let\thefootnote\relax\footnote{$^{*}$Corresponding author}

\subjclass[2020]{35K70, 35A01, 35B44.}

\keywords{semilinear pseudo-parabolic equation, critical exponent, nonexistence of global solution.}

\begin{abstract}In the present paper, we study an inhomogeneous pseudo-parabolic equation with nonlocal nonlinearity
$$u_t-k\Delta u_t-\Delta u=I^\gamma_{0+}(|u|^{p})+\omega(x),\,\ (t,x)\in(0,\infty)\times\mathbb{R}^N,$$
where $p>1,\,k\geq 0$, $\omega(x)\neq0$ and $I^\gamma_{0+}$ is the left Riemann-Liouville fractional integral of order $\gamma\in(0,1).$ Based on the test function method, we have proved the blow-up result for the critical case $\gamma=0,\,p=p_c$ for $N\geq3$, which answers an {\bf open question} posed in \cite{Zhou}, and in particular when $k=0$ it improves the result obtained in \cite{Bandle}. An interesting fact is that in the case $\gamma>0$, the problem does not admit global solutions  for any $p>1$ and $\int_{\mathbb{R}^N}\omega(x) dx>0.$\end{abstract}
\maketitle

\section{Introduction}
Recently, Zhou in \cite{Zhou} has investigated the inhomogeneous pseudo-parabolic equation
\begin{equation}\label{03}
\left\{\begin{array}{l}\large\displaystyle
u_t-k\Delta u_t-\Delta u=|u|^{p}+\omega(x),\,\ (t,x)\in(0,\infty)\times\mathbb{R}^N,\\
u(0,x)=u_0(x),\, x\in\mathbb{R}^N,\end{array}\right.\end{equation}
where $p>0$, $k>0$ and $u_0,\,\omega\in C_0(\mathbb{R}^N).$

There was studied the effect of the inhomogeneous term $\omega(x)$ on the critical exponent $p_c$ of the problem \eqref{03}, and it was proven that for
\begin{equation*}\label{0.4} p_c=\left\{\begin{array}{l}
\infty\,\,\,\text{if}\,\,\,N=1,2, \\
\large\displaystyle\frac{N}{N-2}\,\,\,\text{if}\,\,\,N\geq3, \end{array}\right.\end{equation*}
(a) if $1<p<p_c,\,u_0\geq0$ and $\int_{\mathbb{R}^N}\omega(x) dx>0$,  then the solution of \eqref{03} blows up in finite time.\\
(b) if $p> p_c$, then there exist $u_0\geq0$ and $\omega\geq0$ such that the problem \eqref{03} admits global solutions.

Note that the critical case $p=p_c$ was left open (see \cite[Remark 4(b)]{Zhou}).

At first, the problem \eqref{03} for $\omega(x)\equiv0$ has studied in \cite{Cao, Kaikina}.
It is shown that there exists the critical exponent $p_F=1+\frac{2}{N}$, for the pseudo-parabolic equation. This exponent coincides with the Fujita critical exponent of the semilinear heat equations, which was first introduced by Fujita in \cite{Fujita1}.

The problem \eqref{03} with $k = 0$ is considered by Bandle et. al. \cite{Bandle}. Namely, it was studied the cases $(a), (b)$ and \\
(c) if $N\geq3$, $p={p}_{c}$, $\large\displaystyle\int_{\mathbb{R}^N}\omega(x)dx >0$, $\omega(x)={O}(|x|^{-\varepsilon-N})$ as $|x|\to\infty$ for some $\varepsilon>0$, and either $u\geq0$ or
$$\int_{|x|>R}\frac{\omega^-(y)}{|x-y|^{N-2}}dy=\frac{o(1)}{|x|^{N-2}},\, \omega^-=\max\{-\omega, 0\}$$
when $R$ is enough large, then \eqref{03} has no global solutions.

Later on, Jleli et. al. \cite{Jleli}  generalized these results with the forcing term $t^\sigma\omega(x),$ $\sigma>-1$, and showed the effects of forcing term on the critical exponents.

In this paper, we study the semilinear pseudo-parabolic equation with a forcing term depending on the space
\begin{equation}\label{01}
\left\{\begin{array}{l}\large\displaystyle
u_t-k\Delta u_t-\Delta u=I^\gamma_{0+}(|u|^{p})+\omega(x),\,\ (t,x)\in(0,\infty)\times\mathbb{R}^N,\\
u(0,x)=u_0(x),\, x\in\mathbb{R}^N,\end{array}\right.\end{equation}
where $p>1,\,k\geq 0$, $\omega(x)\neq0$ and $I^\gamma_{0+}$ is the left Riemann-Liouville fractional integral of order $\gamma\in[0,1).$

We note that the problem \eqref{01} for $k=0$ and $\omega(x)\equiv0$, was considered in  \cite{BT, Cazenave, Fino, Sun}.

The main purpose of this paper is to prove a blow-up result for the critical case $p=p_c$ for $N\geq3$, thereby answering the {\it open question} proposed in \cite{Zhou}. In addition, to study the effect of nonlocal nonlinearity in time on the critical exponent.

\subsection{Preliminaries}
\begin{definition}[\cite{Kilbas}, p. 69]\label{RD} The left and right Riemann-Liouville fractional integrals of order $\gamma \in(0,1)$ for an integrable function $u(t),\,\,t\in (0,T)$ are given by
$$I^\gamma_{0+}u(t)=\int\limits_{0}^{t}{{\frac{{\left( t-s \right)}^{\gamma -1}}{\Gamma \left( \gamma  \right)}}}u\left( s \right)ds$$ and $$I^\gamma_{T-}u(t)=\int\limits_{t}^{T}{{\frac{{\left( s-t \right)}^{\gamma -1}}{\Gamma \left( \gamma  \right)}}}u\left( s \right)ds.$$
Since $I^\gamma [u](t)\to u(t)$ almost everywhere as $\gamma\to0$ (see \cite{Kilbas}), we can let $I^0 [u](t)= u(t)$.

\end{definition}
\begin{definition}[Weak solution]\label{WS} We say that $u\in L^p_\text{loc}([0,\infty)\times\mathbb{R}^N)$ is a global weak solution to \eqref{01}, if
\begin{equation}\label{W01}\begin{split}
\int_0^T\int_{\mathbb{R}^{N}}&|u|^{p}(I^\gamma_{T-}\varphi)  dx dt +\int_0^T\int_{\mathbb{R}^{N}} \omega\varphi dx dt+\int_{\mathbb{R}^{N}}u_0(\varphi(0,x)-k\Delta\varphi(0,x))dx
\\&= -\int_0^T\int_{\mathbb{R}^{N}}u\varphi_tdx dt+k\int_0^T\int_{\mathbb{R}^{N}}u\Delta\varphi_tdx dt-\int_0^T\int_{\mathbb{R}^{N}}u\Delta\varphi dx dt,
\end{split}\end{equation}
holds for all $T>0$ and $\varphi\in C^{1,2}_{t,x}([0,T], \mathbb{R}^N), \varphi\geq0,$ $\text{supp}_x\varphi\subset\subset\mathbb{R}^N$ and $\varphi(T,\cdot)=0.$
\end{definition}
\begin{lemma}\label{TF}\cite[Lemma 3.1]{Pokhozaev1} Let $\omega\in L^1(\mathbb{R}^N)$ and $\large\displaystyle\int_{\mathbb{R}^N}\omega(x) dx>0.$ Then there exists a test function $0\leq\phi\leq1$
such that
$\int_{\mathbb{R}^N}\omega\phi dx>0.$
\end{lemma}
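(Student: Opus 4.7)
The plan is to construct the desired test function by a standard cutoff-and-dominated-convergence argument. I would fix once and for all a nonnegative function $\chi \in C^{\infty}(\mathbb{R}^N)$ with $0 \leq \chi \leq 1$, $\chi \equiv 1$ on the unit ball $B_1$, and $\chi \equiv 0$ outside $B_2$, and then set $\phi_R(x) := \chi(x/R)$ for $R > 0$. Each $\phi_R$ is smooth, compactly supported, and satisfies $0 \leq \phi_R \leq 1$, so it qualifies as a candidate test function in the sense already used in the paper.

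Next I would study $I(R) := \int_{\mathbb{R}^N} \omega(x)\,\phi_R(x)\,dx$ as $R \to \infty$. Pointwise we have $\phi_R(x) \to 1$ for every fixed $x$, because eventually $|x| \leq R$. Since $|\omega(x)\,\phi_R(x)| \leq |\omega(x)|$ and $\omega \in L^1(\mathbb{R}^N)$ by assumption, Lebesgue's dominated convergence theorem gives
\begin{equation*}
\lim_{R\to\infty} I(R) \;=\; \int_{\mathbb{R}^N} \omega(x)\,dx \;>\; 0,
\end{equation*}
the strict positivity coming from the hypothesis of the lemma.

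Hence there exists $R_0$ such that $I(R_0) > 0$, and taking $\phi := \phi_{R_0}$ yields a test function with $0 \leq \phi \leq 1$ and $\int_{\mathbb{R}^N}\omega\,\phi\,dx > 0$, which is precisely the claim.

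I do not foresee a serious obstacle here: the statement is essentially a density-type observation that smooth compactly supported nonnegative functions can recover the integral of an $L^1$ function. The only tiny subtlety is making sure the cutoff $\phi$ belongs to whatever class of admissible test functions the subsequent blow-up analysis requires; choosing $\phi$ smooth with compact support as above is the safest and most flexible option, and it fits inside any reasonable class (in particular the $C^{1,2}_{t,x}$ class used in Definition \ref{WS} once extended trivially in $t$).
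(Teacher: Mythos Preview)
Your argument is correct and is the standard proof of this fact. Note, however, that the paper does not actually supply its own proof of Lemma~\ref{TF}: the lemma is quoted verbatim from \cite[Lemma 3.1]{Pokhozaev1}, so there is no in-paper proof to compare against. That said, your cutoff-plus-dominated-convergence argument is precisely the mechanism the paper relies on implicitly when it applies the lemma: in part~(i) the spatial test function is $\xi(x)=\Phi(|x|^2/R^2)$, and in part~(ii) it is $\phi(x)=\mathcal{F}\bigl(\ln(|x|/\sqrt{R})/\ln(\sqrt{R})\bigr)$; in both cases the contradiction is obtained by noting that these functions increase to $1$ as $R\to\infty$, so that $\int_{\mathbb{R}^N}\omega\,\xi\,dx\to\int_{\mathbb{R}^N}\omega\,dx>0$ by dominated convergence, exactly as in your proof. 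Your choice $\phi_R(x)=\chi(x/R)$ is of the same type and fits the test-function classes used throughout the paper.
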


\section{Main results}  In this section, we will show the blow-up of the solution to \eqref{01} using the test function method.
\begin{theorem}\label{MM}
Let $u_0,\,\omega\in L^1(\mathbb{R}^N)$ and $\int_{\mathbb{R}^N}\omega(x) dx>0.$
Then
\\{\bf (i)} if $\gamma>0$, then for any $p>1$ the problem \eqref{01} admits no global weak solution.
\\{\bf (ii)} if $\gamma=0$ and $p=p_c=\frac{N}{N-2},\,N\geq3$, then the problem \eqref{01} admits no global weak solution.
\end{theorem}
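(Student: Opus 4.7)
The proof follows the nonlinear capacity (test function) method of Mitidieri and Pokhozhaev, arguing by contradiction. Suppose $u$ is a global weak solution and substitute into \eqref{W01} a test function of product form $\varphi_T(t,x)=\Phi_0(t/T)^\ell\Psi(x)$, where $\Phi_0\in C^\infty_c([0,\infty))$ is a standard cutoff (equal to $1$ on $[0,1/2]$, supported in $[0,1]$), $\ell$ is taken large, and the spatial profile $\Psi$ will be chosen differently in the two parts. Applying H\"older's inequality with exponents $p,p'$ against the weight $(I^\gamma_{T-}\varphi_T)\,dx\,dt$ on each of the three linear terms $\iint u\varphi_{Tt}$, $\iint u\Delta\varphi_T$, $\iint u\Delta\varphi_{Tt}$ on the right-hand side, and absorbing the resulting $(\iint|u|^p I^\gamma_{T-}\varphi_T)^{1/p}$ factor into the left-hand side by $\varepsilon$-Young, one obtains the master inequality
\begin{equation*}
\iint\omega\varphi_T\,dx\,dt + \int_{\mathbb{R}^N}u_0(\varphi_T(0)-k\Delta\varphi_T(0))\,dx \;\le\; C\bigl(\mathcal{K}_1+k\mathcal{K}_2+\mathcal{K}_3\bigr),
\end{equation*}
where $\mathcal{K}_j := \iint |\mathcal{D}_j\varphi_T|^{p'}(I^\gamma_{T-}\varphi_T)^{1-p'}\,dx\,dt$ for $\mathcal{D}_j\in\{\partial_t,\Delta,\Delta\partial_t\}$.

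For part~(i), choose $\Psi$ as a compactly supported function provided by Lemma~\ref{TF} so that $\int\omega\Psi\,dx>0$; then $\iint\omega\varphi_T\ge c_1T$ with $c_1>0$ uniformly in $T$. The product structure gives $I^\gamma_{T-}\varphi_T(t,x)=\Psi(x)\,I^\gamma_{T-}[\Phi_0(\cdot/T)^\ell](t)$, and a scaling identity for the Riemann--Liouville kernel yields $I^\gamma_{T-}\varphi_T\ge c\,T^\gamma\varphi_T$ on $\{0\le t\le T/2\}$. Consequently each $\mathcal{K}_j$ picks up an extra factor $T^{-\gamma(p'-1)}$; since the spatial integrals involved are finite by compactness of $\Psi$, the time parts produce $\mathcal{K}_1,\mathcal{K}_2 = O(T^{1-p'-\gamma(p'-1)})$ and $\mathcal{K}_3 = O(T^{1-\gamma(p'-1)})$. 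Because $\gamma(p'-1)>0$ for every $p>1$ and $\gamma>0$, the right-hand side is $o(T)$, which contradicts $c_1T$ on the left.

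For part~(ii), $\gamma=0$, $p=p_c=N/(N-2)$ and $p'=N/2$. Here I take $\Psi(x)=\Psi_0(|x|/R)^\ell$ with $R=T^{1/2}$, so by dominated convergence $\iint\omega\varphi_T\ge c_1T$ still holds, but the parabolic scaling yields $\mathcal{K}_1,\mathcal{K}_3=O(T)$, of the same order as the left-hand side; the crude argument no longer suffices. I refine H\"older by restricting each of the three right-hand side integrals to the support of the corresponding derivative of $\varphi_T$, namely the space-time annulus $A_T := \{T/2\le t\le T\}\cup\{R/2\le|x|\le R\}$. Without absorption this upgrades the inequality to
\begin{equation*}
c_1T + \int_{\mathbb{R}^N}u_0\bigl(\varphi_T(0)-k\Delta\varphi_T(0)\bigr)\,dx \;\le\; C\Bigl(\iint_{A_T}|u|^p\,\varphi_T\,dx\,dt\Bigr)^{1/p}\,T^{1/p'}.
\end{equation*}
Once the auxiliary bound $|u|^p\in L^1((0,\infty)\times\mathbb{R}^N)$ is secured, the right-hand integral stays bounded uniformly in $T$, and dividing by $T=T^{1/p}T^{1/p'}$ forces $c_1\le o(1)$ as $T\to\infty$, which is the required contradiction.

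The main obstacle is therefore the auxiliary integrability at the critical exponent, since the un-refined master inequality delivers only $\iint|u|^p\varphi_T\le CT$. I would cancel this growth factor by replacing the spatial cutoff by a truncated Hardy-type profile such as $\Psi(x)=\chi(x/R)(1+|x|^2)^{-(N-2)/2}$: at $p=p_c$, a radial computation shows that the integrand of $\int|\Delta\Psi|^{p'}\Psi^{1-p'}\,dx$ behaves like $r^{-(N-1)}$ at infinity, which is integrable precisely for $N\ge 3$. This kills the $T$-growth from the spatial contribution of $\mathcal{K}_3$; letting $T,R\to\infty$ and invoking Fatou delivers $|u|^p\in L^1$, and combining with the refined annular estimate closes the proof.
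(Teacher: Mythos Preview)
Your treatment of part~(i) is essentially the paper's argument: product test function, $\varepsilon$-Young absorption, and the observation that the fractional integral contributes an extra factor $T^{-\gamma(p'-1)}$ which drives every $\mathcal{K}_j/T$ to zero. The paper uses the explicit profile $\psi(t)=(1-t/T)^m$ so that $I^\gamma_{T-}\psi$ can be computed in closed form, whereas you invoke scaling of the Riemann--Liouville kernel; both are fine.

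Part~(ii), however, has a genuine gap. Your plan is the standard two-step scheme: first prove $|u|^p\in L^1((0,\infty)\times\mathbb{R}^N)$, then use an annular H\"older estimate. The difficulty is step one, and the Hardy-type profile $\Psi(x)=\chi(x/R)(1+|x|^2)^{-(N-2)/2}$ does not accomplish it. At $p=p_c$ one has $p'=N/2$ and, already for the \emph{ordinary} cutoff $\Psi_0(|x|/R)^\ell$, the spatial integral $\int|\Delta\Psi|^{p'}\Psi^{1-p'}dx$ scales like $R^{N-2p'}=R^0$, i.e.\ it is bounded. The $O(T)$ growth of $\mathcal{K}_3$ comes entirely from the time factor $\int_0^T\eta\,dt\sim T$, not from the spatial part, so switching to a Hardy profile changes nothing: you still get $\iint|u|^p\varphi_T\le CT$ and cannot deduce $|u|^p\in L^1$. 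Moreover, with that profile $\int\omega\Psi\,dx$ converges to $\int\omega(x)(1+|x|^2)^{-(N-2)/2}dx$, whose sign is not controlled by the hypothesis $\int\omega>0$.

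The paper bypasses the $L^1$ step altogether by choosing a \emph{logarithmic} spatial cutoff
\[
\phi(x)=\mathcal{F}\!\left(\frac{\ln(|x|/\sqrt{R})}{\ln\sqrt{R}}\right),
\]
supported in $\{|x|\le R\}$ and equal to $1$ on $\{|x|\le\sqrt{R}\}$. The crucial gain is $|\Delta\phi|\le C\,|x|^{-2}(\ln R)^{-1}\phi$, so that
\[
\int_{\mathbb{R}^N}\phi^{1-p'}|\Delta\phi|^{p'}\,dx\;\le\;C(\ln R)^{-p'}\int_{\sqrt{R}<|x|<R}|x|^{-N}\,dx\;\le\;C(\ln R)^{1-N/2}\longrightarrow 0.
\]
With the time cutoff taken as $\nu(t/T)$ with $\mathrm{supp}\,\nu\subset(0,1)$ (which also annihilates the initial-data term, avoiding any sign issue for $u_0$), one then sets $T=R^j$, $j>2$, and all three terms on the right of the master inequality tend to zero, contradicting $\int\omega\phi\,dx>0$ from Lemma~\ref{TF}. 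This logarithmic refinement is the missing ingredient in your argument.
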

\begin{remark}
Note that the part (ii) of Theorem \ref{MM} answers to the open question posed by Zhou in \cite{Zhou}.
\end{remark}
\begin{remark}
When $k=0$ the equation \eqref{01} coincides with the heat equation considered in \cite{Bandle}, then our results remain true for the heat equation. Note that the part (i) of Theorem \ref{MM} in the case $k=0$ improves the result in \cite{Samet}, since we do not assume that $u_0$ is positive. The part (ii) of Theorem \ref{MM}, in case $k=0$ improves the result in \cite{Bandle}. Since we do not assume some asymptotic properties of the function $\omega(x)$ as in \cite{Bandle}, our result improves part (b) of Theorem 2.1 from \cite{Bandle}.
\end{remark}
\begin{proof}[Proof of Theorem \ref{MM}.]
We present the proofs of the cases {\bf (i)} and {\bf (ii)} separately.

{\bf (i)} \textbf{The case $\gamma>0$ and $p>1$.} The proof is done by contradiction.
\\Assume that $u$ is a global weak solution to problem \eqref{01}. We choose the test function in the following form $$\varphi(t,x)=\psi(t)\xi(x),$$
with
\begin{equation*}\label{QT2}
\psi(t)=\biggl(1-\frac{t}{T}\biggr)^m,\,m>\frac{p+\gamma}{p-1},\,\,\, t\in[0,T],\,\,\,T\in(0,\infty),  \end{equation*}
and
\begin{equation*}\label{QT1}
\xi(x)=\Phi\left(\frac{|x|^2}{R^2}\right),\,\,R\gg1,\,\, x\in\mathbb{R}^{N}.
\end{equation*}

Let $\Phi(z)\in C_0^\infty(\mathbb{R_+})$ be a nonincreasing function
\begin{equation*}
\Phi(z)=
 \begin{cases}
   1 &\text{if $0\leq z\leq1 $},\\
   \searrow &\text{if $1<z<2$},\\
   0 &\text{if $z\geq 2$}.
 \end{cases}
\end{equation*}
Then, from \eqref{W01} it follows that
\begin{equation}\label{W02}\begin{split}
\int_0^T&\int_{\mathbb{R}^{N}}|u|^{p}(I^\gamma_{T-}\varphi)  dx dt +\int_0^T\int_{\mathbb{R}^{N}} \omega\varphi dx dt+\int_{\mathbb{R}^{N}}u_0(\varphi(0,x)-k\Delta\varphi(0,x))dx
\\&\leq \int_0^T\int_{\mathbb{R}^{N}}|u||\varphi_t|dx dt+k\int_0^T\int_{\mathbb{R}^{N}}|u||\Delta\varphi_t|dx dt+\int_0^T\int_{\mathbb{R}^{N}}|u||\Delta\varphi| dx dt.
\end{split}\end{equation}
Using the $\varepsilon$-Young inequality in the right-side of \eqref{W02} with $\large\displaystyle\varepsilon=\frac{p}{3}$, we obtain
\begin{equation*}\label{M1}\begin{split}
\int_0^T\int_{\mathbb{R}^{N}}|u|\left|\varphi_t\right|dx dt&\leq\frac{1}{3}\int_0^T\int_{\mathbb{R}^{N}}|u|^{p}(I^\gamma_{T-}\varphi)  dx dt
\\&+\frac{p-1}{p}\biggl(\frac{p}{3}\biggr)^{-\frac{1}{p-1}}\underbrace{\int_0^T\int_{\mathbb{R}^{N}}(I^\gamma_{T-}\varphi)^{-\frac{1}{p-1}}\left|\varphi_t\right|^{\frac{p}{p-1}}dx dt}_{\mathcal{I}_1}. \end{split}\end{equation*}
Similarly, one obtains
\begin{equation*}\label{M2}\begin{split}
\int_0^T\int_{\mathbb{R}^{N}}|u|\left|\Delta\varphi_t\right|dx dt&\leq\frac{1}{3}\int_0^T\int_{\mathbb{R}^{N}}|u|^{p}(I^\gamma_{T-}\varphi)  dx dt
\\&+\frac{p-1}{p}\biggl(\frac{p}{3}\biggr)^{-\frac{1}{p-1}}\underbrace{\int_0^T\int_{\mathbb{R}^{N}}(I^\gamma_{T-}\varphi)^{-\frac{1}{p-1}}\left|\Delta\varphi_t\right|^{\frac{p}{p-1}}dx dt}_{\mathcal{I}_2},
\end{split}\end{equation*}

and
\begin{equation*}\label{M3}\begin{split}
\int_0^T\int_{\mathbb{R}^{N}}|u|\left|\Delta\varphi\right|dx dt&\leq\frac{1}{3}\int_0^T\int_{\mathbb{R}^{N}}|u|^{p}(I^\gamma_{T-}\varphi)  dx dt
\\&+\frac{p-1}{p}\biggl(\frac{p}{3}\biggr)^{-\frac{1}{p-1}}\underbrace{\int_0^T\int_{\mathbb{R}^{N}}(I^\gamma_{T-}\varphi)^{-\frac{1}{p-1}}\left|\Delta\varphi\right|^{\frac{p}{p-1}}dx dt}_{\mathcal{I}_3}.\end{split}\end{equation*}
Therefore, we can rewrite the inequality \eqref{W02} in the following form
\begin{equation}\label{W03}\begin{split}
&\int_0^T\int_{\mathbb{R}^{N}} \omega\varphi dx dt+\int_{\mathbb{R}^{N}}u_0(\varphi(0,x)-k\Delta\varphi(0,x))dx
\leq C(p)\biggr(\mathcal{I}_1+k\mathcal{I}_2+\mathcal{I}_3\biggr),
\end{split}\end{equation} where $C(p)=\frac{p-1}{p}\biggl(\frac{p}{3}\biggr)^{-\frac{1}{p-1}}.$

Next, we estimate the integrals $\mathcal{I}_1, \mathcal{I}_2, \mathcal{I}_3$. At this stage, inserting the equality
\begin{equation}\label{M4}\begin{split}
(I^\gamma_{T-}\psi)(t)=\frac{\Gamma(m+1)}{\Gamma(\gamma+m+1)}T^{\gamma}\biggl(1-\frac{t}{T}\biggr)^{m+\gamma},\,\,\, t\in[0,T) ,\end{split}\end{equation}to the term of the above integrals and changing the variable $y=xR$, we obtain
\begin{equation}\label{M5}\begin{split}
&\mathcal{I}_1\leq CT^{\frac{-\gamma-1}{p-1}}R^N,
\\&\mathcal{I}_2\leq CT^{\frac{-\gamma-1}{p-1}}R^{N-\frac{2p}{p-1}},
\\&\mathcal{I}_3\leq CT^{1-\frac{\gamma}{p-1}}R^{N-\frac{2p}{p-1}}.\end{split}\end{equation}
On the other hand, it follows from a simple calculation that
\begin{equation}\label{M6}\begin{split}
\int_0^T\psi(t)dt=\int_0^T\biggl(1-\frac{t}{T}\biggr)^mdt=C(m)T.\end{split}\end{equation}
Combining  \eqref{W03}-\eqref{M6} we arrive at
\begin{equation*}\label{W04}\begin{split}
\int_{\mathbb{R}^{N}} \omega\xi dx&+C(m)T^{-1}\int_{\mathbb{R}^{N}}u_0(\xi-k\Delta\xi)dx \\&\leq C(p,m)\biggr(CT^{\frac{-\gamma-1}{p-1}-1}R^N+ kCT^{\frac{-\gamma-1}{p-1}-1}R^{N-\frac{2p}{p-1}}+ CT^{-\frac{\gamma}{p-1}}R^{N-\frac{2p}{p-1}}\biggr).
\end{split}\end{equation*}
Finally, fixing $R$ and passing $T\to+\infty$ in the last inequality and using Lemma \ref{TF}, we deduce that
$\int_{\mathbb{R}^{N}} \omega\xi dx \leq 0,$ which is a contradiction.

{\bf (ii)} \textbf{The critical case $\gamma=0$ and $p=p_c=\frac{N}{N-2},\,N\geq 3.$} The proof also will be done by contradiction. Suppose that $u$ is a global weak solution to \eqref{01}.
\\Now, following the idea of \cite{Agarwal}, we set the test function as $$\varphi(t,x)=\eta(t)\phi(x),$$
for large enough $R, T$
\begin{equation}\label{T2}
\eta(t)=\nu\left(\frac{t}{T}\right),\,\,\, t>0,    \end{equation}
and
\begin{equation}\label{T1}
\phi(x)=\Psi\left(\frac{\ln\left(\frac{|x|}{\sqrt{R}}\right)}{\ln\left(\sqrt{R}\right)}\right),\,\, x\in\mathbb{R}^{N}.
\end{equation}
Let $\nu\in C^\infty(\mathbb{R})$ be such that
$\nu\geq0;\,\,\,\nu\not\equiv0;\,\,\,\text{supp}(\nu)\subset(0,1),$ and $\Psi:\mathbb{R}\to[0,1]$ be a smooth function  satisfying
\begin{equation}\label{TT}
\Psi(s)=\left\{\begin{array}{l}
1,\,\,\text{if}\,\,-\infty< s\leq0,\\
0,\,\,\text{if}\,\,s\geq1.\end{array}\right.
\end{equation}
and there exist positive constants $\theta_1, \theta_2$ such that
\begin{equation}\label{QWQ}
|\phi''(x)|\leq\theta_1|\phi(x)|,\,\,|\phi'(x)|\leq\theta_2|\phi(x)|. \end{equation}
Using the fact that  $\text{supp}(\nu)\subset(0,1)$, we can easily get
\begin{equation}\label{support}
\int_{\mathbb{R}^{N}}u_0(\varphi(0,x)-k\Delta\varphi(0,x))dx=\nu(0)\int_{\mathbb{R}^{N}}u_0(\phi(x)-k\Delta\phi(x))dx=0.
\end{equation}
Then, acting in the same way as in the above case, we get the following estimate
\begin{equation}\label{T3}\begin{split}
&\int_0^T\int_{\mathbb{R}^{N}} \omega\varphi dx dt\leq C(p)\biggr(\mathcal{J}_1+k\mathcal{J}_2+\mathcal{J}_3\biggr),
\end{split}\end{equation} with \begin{equation*}\begin{split}
&\mathcal{J}_1=\int_0^T\int_{\mathbb{R}^{N}}\varphi^{-\frac{1}{p-1}}\left|\varphi_t\right|^{\frac{p}{p-1}}dx dt,
\\&\mathcal{J}_2=\int_0^T\int_{\mathbb{R}^{N}}\varphi^{-\frac{1}{p-1}}\left|\Delta\varphi_t\right|^{\frac{p}{p-1}}dx dt,
\\&\mathcal{J}_3=\int_0^T\int_{\mathbb{R}^{N}}\varphi^{-\frac{1}{p-1}}\left|\Delta\varphi\right|^{\frac{p}{p-1}}dx dt.
\end{split}\end{equation*}
In view of \eqref{T2} and \eqref{T1}, let us calculate the next integral
\begin{equation}\begin{split}\label{J2}
\mathcal{J}_2=\biggl( \int_0^T\eta^{-\frac{1}{p-1}}\left|\eta_t\right|^{\frac{p}{p-1}}dt\biggr)\biggl(\int_{\mathbb{R}^{N}}\phi^{-\frac{1}{p-1}}\left|\Delta\phi\right|^{\frac{p}{p-1}}dx\biggr).
\end{split}\end{equation}
Indeed, the function $\phi$ is a radial, and remaining \eqref{QWQ} we arrive at
\begin{equation*}\begin{split}
|\Delta\phi|&=\frac{d^2\phi}{dr^2}+\frac{N-1}{r}\frac{d\phi}{dr}
=\phi''\frac{1}{r^2\ln^2\sqrt{R}}+\phi'\frac{N-2}{r^2\ln\sqrt{R}}\\&\leq \theta_1|\phi|\frac{1}{r^2\ln^2\sqrt{R}}+\theta_2|\phi|\frac{N-2}{r^2\ln\sqrt{R}}\\&\leq \frac{C}{r^2\ln R}|\phi|,
\end{split}\end{equation*}where $r=|x|=(x_1^2+x_2^2+...+x_n^2)^\frac{1}{2}.$\\
Since, $\large\displaystyle p=\frac{N}{N-2}$, by inserting the last inequality into \eqref{J2}, we can verify that
\begin{equation*}\begin{split}
\int_{\mathbb{R}^{N}}\phi^{-\frac{1}{p-1}}\left|\Delta\phi\right|^{\frac{p}{p-1}}dx\leq C^{\frac{N}{2}}(\ln R)^{-\frac{N}{2}}\int_{\mathbb{R}^{N}}\frac{|\phi|}{|x|^N}dx.\end{split}\end{equation*}
Using \eqref{T1}  and \eqref{TT}, we get
\begin{equation}\label{J21}\int_{\mathbb{R}^N}\phi^{-\frac{1}{p-1}}\left|\Delta\phi\right|^{\frac{p}{p-1}}dx\leq C(\ln R)^{\frac{2-N}{2}}.\end{equation}
Similarly, from \eqref{T2} one obtains
\begin{equation}\begin{split}\label{J22}
\int_0^T\eta^{-\frac{1}{p-1}}\left|\eta_t\right|^{\frac{p}{p-1}}dt=CT^{-\frac{N}{2}+1}. \end{split}\end{equation}
By combining \eqref{J21}-\eqref{J22}, we can rewrite \eqref{J2} as
\begin{equation*}\label{E2}
\mathcal{J}_2\leq kCT^{-\frac{N}{2}+1}(\ln R)^{\frac{2-N}{2}}.
\end{equation*}
Consequently, we will estimate the integrals $\mathcal{J}_1$ and $\mathcal{J}_3$, respectively, in the following form
$$\mathcal{J}_1\leq CT^{-\frac{N}{2}+1}R^N$$ and $$\mathcal{J}_3\leq CT^{1}(\ln R)^\frac{2-N}{2}.$$
Finally, we deduce that
\begin{equation}\label{ILQ}\begin{split}
&\int_{\mathbb{R}^{N}} \omega\phi dx \leq C(p)\biggr(CT^{-\frac{N}{2}}R^N+kCT^{-\frac{N}{2}}(\ln R)^\frac{2-N}{2}+C(\ln R)^\frac{2-N}{2}\biggr).
\end{split}\end{equation}
Now for $T=R^j, j>0,$ we get
\begin{equation*}\begin{split}
\int_{\mathbb{R}^{N}} \omega\phi dx\leq C\left(CR^{-\frac{N(j-2)}{2}}+kCR^{-\frac{N(j-2)}{2}}(\ln R)^{\frac{2-N}{2}}+C(\ln R)^{\frac{2-N}{2}}\right). \end{split}\end{equation*}
Taking $j>2$ and passing to the limit as $R\to\infty$ in the above inequality and in view of Lemma \ref{TF}, we deduce that $\int_{\mathbb{R}^{N}} \omega\phi dx\leq0,$ which is a contradiction.\end{proof}

\section*{\large Acknowledgments}
This research has been funded by the Science Committee of the Ministry of Education and Science of the Republic of Kazakhstan (Grant No. AP09259578) and by the FWO Odysseus 1 grant G.0H94.18N: Analysis and Partial Differential Equations. No new data was collected or generated during the course of research.

\end{document}